\newtheorem {theorem}{Theorem}[section]
\newtheorem {lemma}{Lemma}[section]
\newtheorem {corollary}{Corollary}[section]
\theoremstyle{definition}
\newtheorem{definition}{Definition}[section]
\newtheorem{remark}{Remark}[section]
\newcommand{\rank}{{\rm rank}}
\newcommand{\aff}{{\rm aff}}
\newcommand{\ri}{{\rm ri}}
\def\EES{{\accent"5E e}\kern-.5em\raise.8ex\hbox{\char'23 }}
\def\ow{o\kern-.42em\raise.82ex\hbox{
   \vrule width .12em height .0ex depth .075ex \kern-0.16em \char'56}\kern-.07em}
\def\OW{o\kern-.460em\raise1.36ex\hbox{
\vrule width .13em height .0ex depth .075ex \kern-0.16em
\char'56}\kern-.07em}
\def\DD{D\kern-.7em\raise0.4ex\hbox{\char '55}\kern.33em}
\title{Stability of closedness of closed convex sets under linear mappings}
\author{S\~i Ti\d{\^e}p \DD inh$^\dag$}
\address{Institute of Mathematics, VAST, 18, Hoang Quoc Viet Road, Cau Giay District 10307, Hanoi, Vietnam}
\email{dstiep@math.ac.vn}
\author[T. S. Ph\d{a}m]{Ti\'{\^{e}}n-S\OW n Ph\d{a}m$^\ddag$}
\address{Department of Mathematics, Dalat University, 1 Phu Dong Thien Vuong, Dalat, Vietnam}
\email{sonpt@dlu.edu.vn}
\thanks{$^\dag$The first author is partially supported by Vietnam National Foundation for Science and Technology Development (NAFOSTED) grant 101.04-2019.305}
\thanks{$^{\ddag}$The second author is partially supported by Vietnam National Foundation for Science and Technology Development (NAFOSTED), grant 101.04-2019.302.}
\subjclass{Primary 47N10; Secondary 90C25, 90C22}
\keywords{asymptotic cones; closedness; convex cones; convex sets; linear mappings; stability; $\sigma$-porosity}
\date{ \today}
\begin{document}

\begin{abstract}
We study the problem of when the linear image of a fixed closed convex set $X \subset\mathbb{R}^n$ is closed. Specifically, we improve the main results in the papers \cite{Borwein2009, Borwein2010} by showing that for almost all linear mappings $T$ from $\mathbb{R}^n$ into $\mathbb{R}^m,$ not only $T(X)$ is closed, but there is also an open neighborhood of $T$ whose members also preserve the closedness of $X.$
\end{abstract}

\maketitle

\pagestyle{plain}

\section{Introduction}

We consider the question of when the linear image of a closed convex set in $\mathbb{R}^n$ is closed. The closedness of such images is of significance in convex analysis, since it allows one to keep lower semi-continuity of functions and to assure the existence of solutions to various extremum problems. For more details on this topic, we refer the reader to \cite{Auslender2003, Dur2017, Liu2018, Pataki2007, Pataki2001}.

It is well-known that the linear image of a closed convex set is not necessarily closed. In fact, this does not necessarily hold even for closed convex cones. On the other hand, it is shown in \cite{Borwein2009} that for a given closed convex cone $X$ in $\mathbb{R}^n,$ the set
\begin{eqnarray*}
\mathrm{int} (\{T \in L(\mathbb{R}^n, \mathbb{R}^m) \ : \ T(X) \text{ is closed}\})
\end{eqnarray*}
is dense and open in $L(\mathbb{R}^n, \mathbb{R}^m)$-the space of all linear mappings from $\mathbb{R}^n$ into $\mathbb{R}^m.$ This result is refined in \cite{Borwein2010}, where it is proved that
$$L(\mathbb{R}^n, \mathbb{R}^m) \setminus \mathrm{int}(\{T\in L(\mathbb{R}^n, \mathbb{R}^m) \ : \ T(X) \text{ is closed}\})$$
is $\sigma$-porous in $L(\mathbb{R}^n, \mathbb{R}^m),$ i.e. small with regard to both measure and category. (See the next section for notation and definitions.)

The aim of this paper is to improve the above two results by weakening the assumption that $X$ is a closed convex cone to the assumption that $X$ is a closed convex set.
More precisely, the main result of the paper is the following.

\begin{theorem}\label{OpenDense}
Let $X \subset \mathbb{R}^n$ be a closed convex set. Then the set
$$L(\mathbb{R}^n, \mathbb{R}^m) \setminus \mathrm{int}(\{T\in L(\mathbb{R}^n, \mathbb{R}^m) \ : \ T(X) \text{ is closed}\})$$
is $\sigma$-porous in $L(\mathbb{R}^n, \mathbb{R}^m).$ In particular, the set
$$\mathrm{int}(\{T\in L(\mathbb{R}^n, \mathbb{R}^m) \ : \ T(X) \text{ is closed}\})$$
is dense and open in $L(\mathbb{R}^n, \mathbb{R}^m).$
\end{theorem}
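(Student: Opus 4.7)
My plan is to reduce to the closed convex cone case treated in \cite{Borwein2010} via the standard homogenization construction. To $X$ I associate the closed convex cone
$$\tilde X \ := \ \overline{\{(tx, t) : x \in X, \ t \geq 0\}} \ \subseteq \ \mathbb{R}^{n+1},$$
and to each $T \in L(\mathbb{R}^n, \mathbb{R}^m)$ the lifted linear map $\tilde T \in L(\mathbb{R}^{n+1}, \mathbb{R}^{m+1})$ defined by $\tilde T(x, t) := (Tx, t)$. The assignment $T \mapsto \tilde T$ is an isometric affine embedding, and the slicing identity $\tilde T(\tilde X) \cap (\mathbb{R}^m \times \{1\}) = T(X) \times \{1\}$ shows that closedness of $\tilde T(\tilde X)$ implies closedness of $T(X)$.

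Stability then transfers cleanly: if $\tilde T$ lies in the interior of the closedness set for $\tilde X$ inside $L(\mathbb{R}^{n+1}, \mathbb{R}^{m+1})$, then every small perturbation $T'$ of $T$ lifts to a perturbation of $\tilde T$ of the same size, whose image on $\tilde X$ is therefore closed, so that $T'(X)$ is closed by the slicing identity. Taking complements, the set of $T$ \emph{not} in the interior of the closedness set for $X$ is contained in the pre-image, under $T \mapsto \tilde T$, of the set of $\tilde S$ not in the interior of the closedness set for $\tilde X$; and by the main result of \cite{Borwein2010} applied to the closed convex cone $\tilde X$, the latter set is $\sigma$-porous in $L(\mathbb{R}^{n+1}, \mathbb{R}^{m+1})$.

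The main obstacle is the final step. In general, pulling a $\sigma$-porous set back through an isometric affine embedding does \emph{not} yield a $\sigma$-porous set, because the image of $L(\mathbb{R}^n, \mathbb{R}^m)$ under $T \mapsto \tilde T$ is a proper affine subspace of $L(\mathbb{R}^{n+1}, \mathbb{R}^{m+1})$ and hence $\sigma$-porous there. To handle this, I would inspect the porosity estimates of \cite{Borwein2010} applied to $\tilde X$ and verify that at a lifted map $\tilde T$ the perturbations witnessing porosity can always be chosen within the affine subspace of lifts; equivalently, one can rework the cone-case porosity argument directly for $X$, using the recession cone $X_\infty$ together with a base point of $X$ in place of the ambient cone. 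Finally, the density and openness assertion of the theorem follows automatically from $\sigma$-porosity, since $\sigma$-porous sets are meager.
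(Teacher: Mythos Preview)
Your homogenization route is natural and the preliminary steps are fine: the slicing identity and the implication ``$\tilde T$ in the interior for $\tilde X$ $\Rightarrow$ $T$ in the interior for $X$'' both hold. But the gap you flag at the end is not a technicality, and your option (a) actually fails. The Borwein--Moors argument for a closed convex cone $K$ bounds the bad set (up to the non-maximal-rank locus) by
\[
\mathcal{A}_K \ := \ \{S : \{0\}\neq K\cap\ker S \subset K\setminus\ri K\},
\]
and proves $\mathcal{A}_K$ is $\sigma$-porous by perturbing $S$ so that $\ker S$ meets $\ri K$. For your cone $\tilde X$ one has $\ri(\tilde X)\subset\{t>0\}$ (the face at $t=0$ is proper), whereas every lift satisfies $\ker\tilde T=\ker T\times\{0\}\subset\{t=0\}$. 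Thus $\ker\tilde T$ can \emph{never} reach $\ri(\tilde X)$, and since $\tilde X\cap\{t=0\}=C_\infty X\times\{0\}$ one gets
\[
\mathcal{A}_{\tilde X}\cap\{\text{lifts}\}\ \longleftrightarrow\ \{T\in L(\mathbb{R}^n,\mathbb{R}^m): C_\infty X\cap\ker T\neq\{0\}\}.
\]
When $\dim\aff(C_\infty X)>m$ this last set has nonempty interior (any $T$ whose kernel meets $\ri(C_\infty X)$ lies in its interior), so it is not $\sigma$-porous. The porosity witnesses of \cite{Borwein2010} therefore \emph{cannot} be confined to the lift subspace.

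Your option (b) is essentially the paper's approach, but it requires substantive work that the sketch omits. The paper proves two stability criteria directly in terms of the asymptotic cone: if $C_\infty X\cap\ker T=\{0\}$ then $T(X)$ is stably closed; and, less obviously, if $T|_Y$ is surjective with $Y:=\aff(C_\infty X)$ and $\ker T$ meets $\ri(C_\infty X)$, then in fact $T(C_\infty X)=\mathbb{R}^m$, so $T(X)=\mathbb{R}^m$ is again stably closed. The complement of the set where one of these holds is then controlled by applying the cone result of \cite{Borwein2010} to $C_\infty X$ inside $L(Y,\mathbb{R}^m)$ and pulling back along the \emph{surjective} restriction $\pi\colon L(\mathbb{R}^n,\mathbb{R}^m)\to L(Y,\mathbb{R}^m)$, $T\mapsto T|_Y$; surjectivity is exactly what makes the ``$\sigma$-porous preimage'' lemma apply. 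That surjective transfer map is the structural ingredient your injective lift $T\mapsto\tilde T$ lacks, and it is why working with $C_\infty X$ rather than $\tilde X$ succeeds.
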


The proof of Theorem~\ref{OpenDense} will be divided into two steps. Firstly, in terms of asymptotic cones, we provide some sufficient conditions under which
the closedness of the image of a closed convex set under a linear mapping from $\mathbb{R}^n$ into $\mathbb{R}^m$ is preserved under small linear perturbations of the linear mapping. Secondly, we show that
these sufficient conditions hold generically.

The rest of the paper is organized as follows. In Section~\ref{SectionPreliminary}, we present some preliminaries which will be used later.
The definition and some properties of asymptotic cones are given in Section~\ref{Section3}. The proof of Theorem~\ref{OpenDense} will be provided in Section~\ref{Section4}.

\section{Preliminaries} \label{SectionPreliminary}

\subsection{Notation}

Let $\mathbb{R}^n$ denote the Euclidean space of dimension $n.$ The corresponding inner product (resp., norm) in $\mathbb{R}^n$  is defined by $\langle x, y \rangle$ for any $x, y \in \mathbb{R}^n$ (resp., $\| x \| := \sqrt{\langle x, x \rangle}$ for any $x \in \mathbb{R}^n$). The open ball centered at $x \in \mathbb{R}^n$ and of radius $r$ is denoted by ${B}_r^n(x)$, or simply ${B}_r(x)$ if it does not lead to a misunderstanding. As usual, $\mathrm{dist}(x, X)$ denotes the Euclidean distance from $x \in \mathbb{R}^n$ to $X \subset \mathbb{R}^n,$ i.e.,
\begin{eqnarray*}
\mathrm{dist}(x, X) &:=& \inf\{\|x - y \| \ : \ y \in X \}.
\end{eqnarray*}

For an arbitrary set $X \subset \mathbb{R}^n,$ $\overline X$ and $\mathrm{int}(X)$ stand for the closure and the interior of $X$ respectively; the {\em affine hull} of $X,$ denoted by $\mathrm{aff}(X),$ is the intersection of all affine subspaces containing $X;$ the {\em relative interior} of $X$, denoted by $\ri(X)$, is defined by
\begin{eqnarray*}
\ri(X) &:=& \{x \in \mathrm{aff}(X) \ : \ \exists r > 0 \text{ such that } {B}_r(x) \cap \aff(X)\subset X\}.
\end{eqnarray*}
Observe that the relative interior of a convex set $X$ is convex, and is nonempty if $X$ is nonempty. Furthermore, if $X$ is a cone, then so is $\ri(X).$

Let $L(\mathbb{R}^n, \mathbb{R}^m)$ denote the set of all linear mappings from $\mathbb{R}^n$ into $\mathbb{R}^m,$ and we will assume that $L(\mathbb{R}^n, \mathbb{R}^m)$ is equipped with the operator norm.

\subsection{$\sigma$-porous sets}

We present some properties on porosity which will be necessary in the proof of Theorem~\ref{OpenDense}. Let us start with the following definition.

\begin{definition}
Let $X \subset \mathbb{R}^n.$ For any $x \in \mathbb{R}^n$ and any $R > 0,$ set
\begin{eqnarray*}
\gamma(x,R,X) &:=& \sup\{r\geqslant 0\ : \ \exists x' \in \mathbb{R}^n \text{ such that } {B}_r(x') \subset {B}_R(x) \setminus X\}
\end{eqnarray*}
(where we put $\sup \emptyset := 0$). Then {\em the porosity of $X$ at $x$} is defined by
\begin{eqnarray*}
p(x, X) &:=& \liminf_{R\to 0^+}\frac{\gamma(x, R, X)}{R}.
\end{eqnarray*}
The set $X$ is said to be {\em porous} if $p(x, X) > 0$ for every $x \in X.$ Finally, we say that $X$ is {\em $\sigma$-porous} if it is a countable union of
porous sets.
\end{definition}

Clearly, any $\sigma$-porous set in $\mathbb{R}^n$ is a set of the first Baire category and is also of Lebesgue measure zero; we refer the reader to the survey \cite{Zajicek2005} for more details. 

The following statement was formulated without proof in \cite[Proposition 2.1]{Borwein2010}. We provide the proof here for the sake of completeness. 

\begin{lemma}\label{Lemma21}
Let $f \colon \mathbb{R}^n \to \mathbb{R}^m$ be a surjective linear mapping. Then $f^{-1}(Y)$ is porous (resp., $\sigma$-porous) in $\mathbb{R}^n$ whenever $Y$ is porous (resp., $\sigma$-porous) in $\mathbb{R}^m.$ In particular, if $f$ is a linear isomorphism, then $f^{-1}(Y)$ is porous (resp., $\sigma$-porous) if, and only if, $Y$ is porous (resp., $\sigma$-porous). 
\end{lemma}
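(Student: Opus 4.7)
The plan is to reduce the $\sigma$-porous case to the porous case (since preimages commute with countable unions), and then handle the porous case by exploiting the open-mapping property of a surjective linear mapping between finite-dimensional spaces: the preimage of a round ball in $\mathbb{R}^m$ contains round balls in $\mathbb{R}^n$ of comparable radius (up to constants depending only on $f$). The final statement about isomorphisms is then immediate by applying the first part to both $f$ and $f^{-1}$.

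For the main (porous) assertion, fix any $x \in f^{-1}(Y)$ and set $y := f(x) \in Y$. Since $f$ is a surjective linear mapping between finite-dimensional spaces, it admits a bounded linear right inverse; concretely, restrict $f$ to the orthogonal complement of $\ker f$ to obtain an isomorphism and let $K > 0$ be the operator norm of its inverse. Then for every $y' \in \mathbb{R}^m$ there exists $x' \in f^{-1}(y')$ with $\|x' - x\| \leq K \|y' - y\|$. Using porosity of $Y$ at $y$, pick $\alpha$ with $0 < \alpha < p(y, Y)$; then for all sufficiently small $R > 0$ there exists $y' \in \mathbb{R}^m$ with $\mathbb{B}_{\alpha R}(y') \subset \mathbb{B}_R(y) \setminus Y$.

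Now lift: choose $x' \in f^{-1}(y')$ with $\|x' - x\| \leq KR$, and set $\rho := \min\bigl(\alpha R / \|f\|,\; KR\bigr)$. For any $z \in \mathbb{B}_{\rho}(x')$ we have $\|f(z) - y'\| \leq \|f\|\cdot \rho \leq \alpha R$, hence $f(z) \in \mathbb{B}_{\alpha R}(y') \subset \mathbb{R}^m \setminus Y$, so $z \notin f^{-1}(Y)$. On the other hand, $\|z - x\| \leq \rho + KR \leq 2KR$. Writing $R' := 2KR$, this shows $\mathbb{B}_{\rho}(x') \subset \mathbb{B}_{R'}(x) \setminus f^{-1}(Y)$, so
\begin{eqnarray*}
\gamma(x, R', f^{-1}(Y)) &\geq& \rho \;\geq\; \min\!\left(\frac{\alpha}{2K\|f\|},\; \frac{1}{2}\right) R',
\end{eqnarray*}
which gives $p(x, f^{-1}(Y)) > 0$, as required.

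For the $\sigma$-porous case, write $Y = \bigcup_{k \in \mathbb{N}} Y_k$ with each $Y_k$ porous; then $f^{-1}(Y) = \bigcup_{k \in \mathbb{N}} f^{-1}(Y_k)$, and by the porous case each $f^{-1}(Y_k)$ is porous, so $f^{-1}(Y)$ is $\sigma$-porous. Finally, if $f$ is a linear isomorphism, the ``only if'' direction follows from what we just proved applied to $f$, while the ``if'' direction follows by applying the same result to the surjective linear mapping $f^{-1}$. The main (and only) technical point to watch is the balancing of the two constraints in the definition of $\rho$ above, so that the witness ball sits inside $\mathbb{B}_{R'}(x)$ while also avoiding $f^{-1}(Y)$; this is entirely a bookkeeping matter once the right inverse $K$ and the norm $\|f\|$ are in hand.
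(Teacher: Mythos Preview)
Your proof is correct and follows essentially the same approach as the paper's: both lift the witness ball in $\mathbb{R}^m$ back to $\mathbb{R}^n$ via the right inverse $(f|_{(\ker f)^\perp})^{-1}$ (your $x'$ is exactly the paper's orthogonal projection of $x$ onto $f^{-1}(y')$), and both bound the image of the lifted ball using $\|f\|$. The only differences are cosmetic bookkeeping of constants---the paper introduces $c=\min\{\nu_f,1\}$ and $M=\max\{\|f\|,1/c\}$ and rescales in $\mathbb{R}^m$ first, whereas you work directly with $K=\nu_f^{-1}$ and $\|f\|$. One trivial slip: in your final paragraph the labels ``if'' and ``only if'' are interchanged (the direction $Y$ porous $\Rightarrow f^{-1}(Y)$ porous is the ``if'' direction, obtained by applying the first part to $f$).
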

\begin{proof}
Observe that the second statement follows immediately from the first statement so it is sufficient to prove the first one. Moreover, it is clear that the case $Y$ being $\sigma$-porous follows directly from the case $Y$ being porous, so it remains to consider the case $Y$ is porous.

Since the linear mapping $f$ is surjective, the restriction $f|_{(\ker f)^\perp}$ is a linear isomorphism. Denote by $\nu_f$ the length of the smallest semi-axis of the ellipsoid image of the unit ball in $(\ker f)^\perp$ by $f.$ Then it is not hard to check that $\nu_f = \|(f|_{(\ker f)^\perp})^{-1}\|^{-1} > 0.$ Therefore, for any $x_1, x_2 \in\mathbb R^n,$ we have
\begin{equation}\label{Eqn1}
\nu_f  \|x_1 - x'\| \leqslant {\|f(x_1) - f(x_2)\|},
\end{equation}
where $x'$ is the orthogonal projection of $x_1$ on the affine space $f^{-1}(f(x_2)).$  For simplicity of notation, we let ${c} := \min\{\nu_f, 1\} \leqslant 1$  and $M := \max\big\{\|f\|, \frac{1}{{c}}\big\} \geqslant 1.$

Now let $x \in f^{-1}(Y)$ be an arbitrary point and set $y := f(x).$ By the assumption, $p(y, Y) > 0,$ so for $0 < R \ll 1,$ there is $y' \in \mathbb{R}^m$ such that 
$${B}^m_{\frac{R{c} p(y, Y)}{2}}(y') \subset {B}^m_{R{c}}(y) \setminus Y.$$
Let $x'$ be the orthogonal projection of $x$ on the affine space $f^{-1}(y').$ It follows from \eqref{Eqn1} that
\begin{eqnarray*}
\|x' - x\| & \leqslant & \frac{\|y' - y\|}{{\nu_f}}.
\end{eqnarray*}
Hence for any $u \in {B}^n_{\frac{R p(y, Y)}{2M}}(x'),$ we have  
\begin{eqnarray*}
\|u - x\| \ \leqslant\ \|u - x'\| + \|x' - x\| & < & \frac{R p(y, Y)}{2M} + \frac{\|y' - y\|}{{\nu_f}}\\
& \leqslant& \frac{R p(y, Y)}{2M} + \frac{\|y' - y\|}{{c}} \\
& < & \frac{R p(y, Y)}{2M} + \frac{R{c} - \frac{R{c} p(y, Y)}{2}}{c} \\
& = & R + \frac{R p(y, Y)}{2M} - \frac{R p(y, Y)}{2}\ \leqslant \ R.
\end{eqnarray*}
Thus 
\begin{eqnarray}\label{Eqn2}
{B}^n_{\frac{Rp(y, Y)}{2M}}(x') &\subset& {B}^n_R(x).
\end{eqnarray}
On the other hand, we have
\begin{eqnarray*}
f\Big({B}^n_{\frac{R {c} p(y, Y)}{2M}}(x') \Big) &=& \frac{R {c} p(y, Y)}{2M}f\Big({B}^n_1(x')\Big)\ \subset \ \frac{R {c} p(y, Y)}{2M}\|f\|{B}^m_1(y')\\
&\subset&\frac{R {c} p(y, Y)}{2}{B}^m_1(y') \ = \ {B}^m_{\frac{R {c} p(y, Y)}{2}}(y') \ \subset \ {B}^m_{R {c}} (y)\setminus Y.
\end{eqnarray*}
Consequently, 
\begin{eqnarray*}
{B}^n_{\frac{R {c} p(y, Y)}{2M}}(x') \cap f^{-1}(Y) &=& \emptyset.
\end{eqnarray*}
This, together with \eqref{Eqn2} and the fact that ${c} \leqslant1,$ gives
$${B}^n_{\frac{R {c} p(y, Y)}{2M}}(x') \subset {B}^n_R(x)\setminus f^{-1}(Y),$$
which yields $p(x, f^{-1}(Y)) \geqslant \frac{{c} p(y, Y)}{2M} > 0,$ and hence the lemma. 
\end{proof}

By Lemma~\ref{Lemma21}, the notion of porosity (and $\sigma$-porosity) in finite dimensional normed linear spaces does not depend on the particular choice of norm.
Furthermore, we also have the following useful properties.

\begin{lemma}\label{Lemma22}
If $X \subset \mathbb{R}^n$ is porous (resp., $\sigma$-porous), then any subset of $X$ is porous (resp., $\sigma$-porous).
\end{lemma}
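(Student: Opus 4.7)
The plan is to exploit monotonicity of the quantity $\gamma(x, R, \cdot)$ with respect to reverse inclusion of the target set. Concretely, let $Y \subset X$ be arbitrary. Since $\mathbb{R}^n \setminus X \subset \mathbb{R}^n \setminus Y$, any open ball $\mathbb{B}_r(x')$ witnessing membership in the supremum defining $\gamma(x, R, X)$ automatically satisfies $\mathbb{B}_r(x') \subset \mathbb{B}_R(x) \setminus Y$ as well. Taking the supremum yields $\gamma(x, R, Y) \geqslant \gamma(x, R, X)$ for every $x \in \mathbb{R}^n$ and every $R > 0$.

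Passing to the liminf as $R \to 0^+$, this monotonicity transfers to the pointwise porosity:
\begin{equation*}
p(x, Y) \ \geqslant \ p(x, X) \qquad \text{for every } x \in \mathbb{R}^n.
\end{equation*}
In particular, for every $y \in Y$ we have $y \in X$ by the hypothesis $Y \subset X$, hence $p(y, Y) \geqslant p(y, X) > 0$ since $X$ is porous. Therefore $Y$ is porous, establishing the first assertion.

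For the $\sigma$-porous case, write $X = \bigcup_{i \in \mathbb{N}} X_i$ with each $X_i$ porous. Given any subset $Y \subset X$, decompose $Y = \bigcup_{i \in \mathbb{N}} (Y \cap X_i)$. Each set $Y \cap X_i$ is a subset of the porous set $X_i$, hence is itself porous by the portion of the lemma already established. Consequently $Y$ is a countable union of porous sets, i.e.\ $\sigma$-porous, which completes the argument.

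No step presents a real obstacle here: the whole proof rests on the elementary observation that enlarging the complement can only enlarge the admissible balls entering the supremum in the definition of $\gamma$. The only minor care needed is to note that in the definition of porosity of a set, the condition $p(x, X) > 0$ is only required at points $x \in X$, so it is essential that a point $y \in Y$ also lies in $X$; this is immediate from $Y \subset X$ and is what makes the inheritance work.
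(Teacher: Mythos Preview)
Your proof is correct and is precisely the straightforward verification the paper has in mind; the paper's own proof consists of the single sentence ``This is straightforward.'' Your monotonicity argument for $\gamma$ and the decomposition $Y=\bigcup_i(Y\cap X_i)$ are exactly the expected details.
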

\begin{proof}
This is straightforward.
\end{proof}

\begin{lemma}\label{Lemma23}
The following statements hold:
\begin{enumerate}[{\rm (i)}]
\item Let $X \subset \mathbb{R}^n$ be a $C^1$-manifold of dimension less than $n.$ Then $X$ is porous.

\item For a non-constant polynomial function $P \colon \mathbb{R}^n \rightarrow \mathbb{R},$ the zero set $P^{-1}(0) \subset \mathbb{R}^n$ is porous.
\end{enumerate}
\end{lemma}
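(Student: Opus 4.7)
The plan is to treat (i) and (ii) in parallel: part (i) is a short geometric argument using the tangent space at each point, and part (ii) adapts the same argument with the tangent space replaced by the ``tangent cone'' given by the lowest-degree homogeneous part of $P$ at a zero.

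For (i), I would fix $x \in X$ and let $T_x X$ denote the tangent space, which has dimension strictly less than $n$; one can therefore pick a unit vector $v \in (T_x X)^\perp$. The $C^1$-chart at $x$ supplies, for any prescribed $\eta > 0$, a radius $\delta > 0$ such that
$$X \cap \mathbb{B}_\delta(x) \ \subset \ \{y \in \mathbb{R}^n : \mathrm{dist}(y, x + T_x X) < \eta \|y - x\|\}.$$
Taking $\eta = 1/8$ and any $0 < R \leq \delta$, the ball $\mathbb{B}_{R/4}(x + (R/2)v)$ lies inside $\mathbb{B}_R(x)$, sits at distance at least $R/4$ from the affine space $x + T_x X$, and therefore avoids $X$. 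This yields $\gamma(x, R, X) \geq R/4$ for all small $R$, so $p(x, X) \geq 1/4 > 0$ and $X$ is porous.

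For (ii), I would fix $x \in P^{-1}(0)$ and translate to assume $x = 0$. Expanding $P = P_d + P_{d+1} + \cdots + P_D$ into homogeneous parts with $P_d \not\equiv 0$ the lowest-degree nonvanishing summand, the hypotheses that $P$ is non-constant and $P(0) = 0$ force $d \geq 1$. Because $P_d$ is a non-zero homogeneous polynomial, the set $\{v \in S^{n-1} : P_d(v) \neq 0\}$ is open and non-empty; I choose $v_0$ there together with a spherical neighborhood $U$ of $v_0$ on which $|P_d| \geq c_0 > 0$. Homogeneity lifts this to $|P_d(w)| \geq c_0 \|w\|^d$ on the open cone $C := \{tu : t > 0,\ u \in U\}$, and the crude estimate $|P(w) - P_d(w)| \leq M \|w\|^{d+1}$ for $\|w\| \leq 1$ then gives $|P(w)| \geq (c_0/2)\|w\|^d > 0$ on $C \cap \mathbb{B}_{R_0}(0)$ for some $R_0 > 0$ depending only on $P$.

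The conclusion now mirrors (i): for $0 < R \leq R_0$, put $x' := (R/2) v_0$ and choose $\kappa > 0$, depending only on the opening angle of $U$, small enough that $\mathbb{B}_{\kappa R}(x')$ lies inside $\mathbb{B}_R(0) \cap C$. Every $y \in \mathbb{B}_{\kappa R}(x')$ then belongs to $C$, so $P(y) \neq 0$, proving $\mathbb{B}_{\kappa R}(x') \subset \mathbb{B}_R(0) \setminus P^{-1}(0)$ and hence $p(0, P^{-1}(0)) \geq \kappa > 0$. The main technical point of the proof, and the only place where care is needed, is coordinating the opening angle of $U$ with the location of $x'$ at radius $R/2$ so that a ball of comparable radius about $x'$ stays inside $C$ while simultaneously remaining in the regime $\|y\| \leq R_0$ where the higher-order remainder is dominated by $|P_d|$; this is a routine but slightly fiddly Euclidean computation.
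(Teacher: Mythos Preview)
Your proof of (i) is correct and parallels the paper's: both compare $X$ locally with an affine subspace of dimension $<n$ and then place a ball in a normal direction. You do this via the tangent-space estimate $\mathrm{dist}(y,\,x+T_xX)<\eta\|y-x\|$, while the paper pulls back through a bi-Lipschitz straightening diffeomorphism; the underlying idea is the same, and your version even yields the clean uniform bound $p(x,X)\geq 1/4$.

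For (ii) the approaches genuinely diverge. The paper appeals to the Cell Decomposition Theorem from real algebraic geometry to write $P^{-1}(0)$ as a finite union of $C^1$-manifolds of dimension $<n$, and then simply invokes (i) on each piece. Your argument is instead direct and elementary: at each zero you translate to the origin, isolate the lowest-degree homogeneous part $P_d$ of the Taylor expansion, choose a direction $v_0$ with $P_d(v_0)\neq 0$, and show that in a small cone about $v_0$ the higher-order remainder cannot cancel $P_d$, so $P$ has no zeros there. This avoids any semi-algebraic machinery, at the mild cost that the porosity constant $\kappa$ depends on the point through $v_0$, the aperture of $U$, and the threshold $R_0$; since the definition of porosity in play only requires $p(x,X)>0$ pointwise, this is harmless. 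The paper's route is shorter to write once one is willing to cite cell decomposition, while yours is fully self-contained.
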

\begin{proof}
(i) Let $d := \dim X < n.$ Take any $x \in X.$ By definition, there is an open set $U \subset \mathbb{R}^n$ containing $x,$ an open set $V \subset \mathbb{R}^n,$ and a diffeomorphism $f \colon U \to V$ such that 
\begin{eqnarray*}
f(U \cap X) 
&=& V \cap (\mathbb{R}^d \times \{0\}).
\end{eqnarray*}
Shrinking $U$ if necessary, we may assume that $f$ is bi-Lipschitz on $U,$ i.e., there exist constants $c_1 > 0$ and $c_2 > 0$ such that
\begin{eqnarray*}
c_1 \|x_1 - x_2\|  &\leqslant& \|f(x_1) - f(x_2) \|  \ \leqslant\ c_2 \|x_1 - x_2\|  \quad \textrm{ for all } \quad x_1, x_2 \in U.
\end{eqnarray*}
Let $R > 0$ be such that ${B}_{(\frac{c_1}{2c_2} + 1)R}(x) \subset U$ and ${B}_{c_1R}(f(x)) \subset V.$ Take any $v \in {B}_{c_1R}(f(x)).$ Then $v = f(u)$ for some $u \in U,$ and so
\begin{eqnarray*}
c_1 \|u - x\|  &\leqslant& \|f(u) - f(x) \|  \ = \ \|v - f(x)\| \ < \ c_1R.
\end{eqnarray*}
Hence, $u \in {B}_R(x).$ Since this holds for arbitrary $v = f(u)$ in ${B}_{c_1R}(f(x)),$ we obtain
\begin{eqnarray*}
{B}_{c_1R}(f(x)) &\subset& f ({B}_R(x)).
\end{eqnarray*}
On the other hand, clearly, there is $y' \in {B}_{c_1R}(f(x))$ such that
\begin{eqnarray*}
{B}_{\frac{c_1R}{2}}(y') &\subset& {B}_{c_1R}(f(x)) \setminus (\mathbb{R}^d \times \{0\}).
\end{eqnarray*}
Therefore, 
\begin{eqnarray*}
g({B}_{\frac{c_1R}{2}}(y') ) &\subset& g({B}_{c_1R}(f(x)) ) \setminus X \ \subset \ {B}_R(x) \setminus X,
\end{eqnarray*}
where $g \colon V \to U$ stands for the inverse of $f.$ 

Let $x' := g(y') \in U,$ i.e., $f(x') = y',$ and take any $u \in {B}_{\frac{c_1R}{2c_2}}(x').$ We have
\begin{eqnarray*}
\|u - x \| &\leqslant & \|u - x'\| + \|x' - x\| \ < \ \frac{c_1R}{2c_2} + \frac{\|f(x') - f(x)\|}{c_1} \ < \ \frac{c_1R}{2c_2} + R,
\end{eqnarray*}
and hence $u \in {B}_{(\frac{c_1}{2c_2} + 1)R}(x) \subset U.$ In particular, $v := f(u) \in f(U) = V.$ Observe that
\begin{eqnarray*}
\|v - y' \| & =  & \|f(u) - f(x')\| \ \leqslant \ c_2 \|u - x'\|  \ < \ \frac{c_1R}{2},
\end{eqnarray*}
which yields $v \in {B}_{\frac{c_1R}{2}}(y').$ Consequently, $u = g(v) \in g({B}_{\frac{c_1R}{2}}(y')).$ Since this holds for arbitrary $u$ in ${B}_{\frac{c_1R}{2c_2}}(x'),$ we get
\begin{eqnarray*}
{B}_{\frac{c_1R}{2c_2}}(x') &\subset& g({B}_{\frac{c_1R}{2}}(y') ) \ \subset \ {B}_R(x) \setminus X.
\end{eqnarray*}
By definition, then
\begin{eqnarray*}
\gamma(x, R, X) &\geqslant& \frac{c_1R}{2c_2},
\end{eqnarray*}
and so $p(x, X) \geqslant \frac{c_1}{2c_2} > 0.$ Therefore, $X$ is porous.

(ii) In fact, by the Cell Decomposition Theorem (see, for example, \cite{Benedetti1990, Bochnak1998, HaHV2017}), the set $P^{-1}(0)$ can be represented as a (disjoint) finite union of sets $X_i,$ where each $X_i$ is a $C^1$-manifold in $\mathbb{R}^n$ of dimension $d_i \in \mathbb{N}.$ Since $P$ is non-constant, $d_i < n$ for every $i.$ Hence, all the sets $X_i$ are porous and so is $P^{-1}(0).$
\end{proof}

With a simpler proof, the following result extends \cite[Theorem~2.2]{Borwein2010}.
\begin{lemma}\label{Lemma24}
Let $m, n \in \mathbb{N}$ and let $Y\subset \mathbb{R}^n$ be a linear subspace. Then the set
$$\{T\in L(\mathbb{R}^n, \mathbb{R}^m) \ : \ T|_Y \text{ does not have maximal rank}\}$$
is porous in $L(\mathbb{R}^n, \mathbb{R}^m).$
\end{lemma}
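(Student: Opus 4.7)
The plan is to represent $L(\mathbb{R}^n, \mathbb{R}^m)$ as a space of matrices and exhibit the set in question as contained in the zero locus of a single non-constant polynomial, which is porous by Lemma~\ref{Lemma23}(ii).

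First I would choose a basis $e_1, \ldots, e_n$ of $\mathbb{R}^n$ whose initial segment $e_1, \ldots, e_d$ is a basis of $Y$, where $d := \dim Y$. Together with the standard basis of $\mathbb{R}^m$, this identifies each $T \in L(\mathbb{R}^n, \mathbb{R}^m)$ with a matrix $(a_{ij}) \in \mathbb{R}^{mn}$, and the restriction $T|_Y$ with the submatrix consisting of the first $d$ columns. The maximal possible rank of $T|_Y$ is $k := \min(d, m)$, and failure to attain it amounts to the simultaneous vanishing of all $k \times k$ minors of this submatrix.

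Next I would single out the $k \times k$ minor $P$ given by the determinant of the submatrix on rows $1, \ldots, k$ and columns $1, \ldots, k$; expanded as a polynomial in the $a_{ij}$, this $P$ contains the monomial $a_{11} a_{22} \cdots a_{kk}$ with coefficient $1$, so $P$ is non-constant. The target set is contained in $P^{-1}(0)$, which is porous in $\mathbb{R}^{mn}$ by Lemma~\ref{Lemma23}(ii); being a subset of this zero locus, it is itself porous by Lemma~\ref{Lemma22}. Since the matrix identification is a linear isomorphism and porosity in a finite-dimensional normed space is independent of the choice of norm (cf.\ the remark following Lemma~\ref{Lemma21}), porosity transfers back to $L(\mathbb{R}^n, \mathbb{R}^m)$.

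The argument is essentially a direct reduction to Lemma~\ref{Lemma23}(ii), so I do not anticipate a substantial obstacle; the only point requiring a moment's care is to exhibit one $k \times k$ minor that is a genuinely non-constant polynomial in the matrix entries, which is immediate once bases adapted to $Y$ have been fixed.
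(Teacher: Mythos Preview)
Your proof is correct and follows essentially the same route as the paper: adapt a basis to $Y$, identify linear maps with matrices, and reduce to Lemma~\ref{Lemma23}(ii) via Lemma~\ref{Lemma21}. The only difference is that the paper forms the sum of squares $P_1^2 + \cdots + P_q^2$ of \emph{all} maximal minors so that the bad set \emph{equals} the zero locus, whereas you pick a single minor and invoke Lemma~\ref{Lemma22} to pass to a subset---a slightly more economical variant of the same argument.
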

\begin{proof}
Set $p:=\dim Y$. Evidently, there is an orthonormal coordinate system $\{x_1,\dots,x_n\}$ on $\mathbb{R}^n$ such that
\begin{eqnarray*}
Y &=&\{x \in \mathbb{R}^n \ : \ x_{p+1} = \dots = x_n = 0\}.
\end{eqnarray*}

Let $M(m,n)$ be the set of all $m\times n$ matrices (over $\mathbb{R}$). For $A\in M(m,n),$ set $|||A|||:=\max\{\|Ax\|:\ \|x\|=1\};$ then $(M(m,n), ||| \cdot |||)$ is a finite dimensional normed linear space. Consider the linear mapping
$$f \colon M(m,n)\to L(\mathbb{R}^n, \mathbb{R}^m), \quad A \mapsto f(A),$$ 
defined by $[f(A)](x)=A(x).$ Then $f$ is a linear isomorphism. For $T\in L(\mathbb{R}^n, \mathbb{R}^m),$ writing $f^{-1}(T)=:A=(a_{ij})_{\substack{i=1,\ldots,m\\ j=1,\ldots,n}}$, we can see that $T|_Y$  does not have maximal rank if and only if $A|_Y:=(a_{ij})_{\substack{i=1,\ldots,m\\ j=1,\ldots,p}}$ does not have maximal rank. Let $P_1,\dots,P_q$ be the minors of $A|_Y$ by either deleting $p-m$ columns of $A|_Y$ if $p\geqslant m$ or deleting $m-p$ rows of $A|_Y$ if $m>p$. Each $P_i,\ i=1,\dots,q,$ is a polynomial with entries of $A|_Y$ as variables, i.e.,
$$P_i=P_i(a_{11},\dots,a_{1p},a_{21},\dots,a_{pp}).$$
Moreover, we can also see $P_i$ as a polynomial in $mn$ variables
$$P_i=P_i(a_{11},\dots,a_{1p},a_{21},\dots,a_{nn}),$$
of course, some variables may not appear in the expression of $P_i$. It is well-known that $A|_Y$ does not have maximal rank if and only if 
$$P_1 = \cdots = P_q = 0,$$ 
or equivalently,
$$P_1^2 + \cdots + P_q^2 =0.$$ 
Therefore, in view of Lemma~\ref{Lemma21}, to prove the proposition, it is enough to show that the algebraic set defined by $P_1^2 + \cdots + P_q^2 =0$ in $(M(m,n),|||\cdot|||)$ is porous. Since
all norms on a finite dimensional space are equivalent, by Lemma~\ref{Lemma21} again, we can replace the norm $|||\cdot|||$ by the Euclidean norm on $M(m,n)$, which is identified with $\mathbb{R}^{m \times n}.$ Now the desired conclusion follows from Lemma~\ref{Lemma23}.
\end{proof}

\section{Asymptotic cones} \label{Section3}
In this section, we present some sufficient conditions to ensure that the closedness of the image of a closed convex set under a linear mapping from $\mathbb{R}^n$ into $\mathbb{R}^m$ is preserved under small linear perturbations of the linear mapping.  To do this, we need the following concept of asymptotic cone, which seems to have appeared first in the literature in the works of Steinitz~\cite{Steinitz19131916}. For more details on this notion, we refer the reader to the papers \cite{Dedieu1977, Dieudonne1966} and the books \cite{Auslender2003, Rockafellar1970, Rockafellar1998} with references therein.

For a given set $X\subset \mathbb{R}^n$, the {\em asymptotic cone} of $X,$ denoted by $C_\infty X,$ is defined by
\begin{eqnarray*}
C_\infty X &:=& \big\{v \in \mathbb R^n \ : \ \exists x^k\in X, \exists t_k\in(0,+\infty) \textrm{ s. t. } \|x^k\|\to \infty, t_kx^k\to v \text{ as }k\to\infty\big\}.
\end{eqnarray*}
From the definition, we deduce immediately that $C_\infty X$ is a closed cone (not necessarily convex) and that $\mathrm{aff} (C_\infty X)$ is a linear subspace of $\mathbb{R}^n.$ Furthermore, we have the following property.

\begin{lemma}[{see \cite[Proposition~2.1.5]{Auslender2003}}] \label{Lemma31}
Let $X$ be a nonempty convex set in $\mathbb{R}^n.$ Then the asymptotic cone $C_\infty X$ is a closed convex cone.
\end{lemma}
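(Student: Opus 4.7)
The plan is to focus entirely on convexity. The paragraph immediately preceding the lemma already records that $C_\infty X$ is a closed cone (i.e.\ closed under positive scalar multiplication) directly from the definition, so the only content left is to show $v_1 + v_2 \in C_\infty X$ whenever $v_1, v_2 \in C_\infty X$; a cone has this additive property if and only if it is convex.

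For the main step, I would start from defining sequences $x^k \in X$, $t_k > 0$ with $\|x^k\| \to \infty$ and $t_k x^k \to v_1$, and similarly $y^k \in X$, $s_k > 0$ with $\|y^k\| \to \infty$ and $s_k y^k \to v_2$. The motivating identity is
$$t_k x^k + s_k y^k \;=\; (t_k + s_k) \Bigl( \tfrac{t_k}{t_k + s_k} x^k + \tfrac{s_k}{t_k + s_k} y^k \Bigr),$$
so I would set $\lambda_k := t_k/(t_k + s_k) \in [0,1]$ and $w^k := \lambda_k x^k + (1-\lambda_k) y^k$. By convexity of $X$, we have $w^k \in X$, and by construction $(t_k + s_k) w^k \to v_1 + v_2$. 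The key check is then that the pair $(w^k, r_k)$ with $r_k := t_k + s_k$ actually witnesses $v_1 + v_2 \in C_\infty X$, which requires $\|w^k\| \to \infty$.

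To verify this, I would first treat the generic case $v_1, v_2 \neq 0$ and $v_1 + v_2 \neq 0$: the relation $t_k \|x^k\| \to \|v_1\|$ combined with $\|x^k\| \to \infty$ forces $t_k \to 0$; likewise $s_k \to 0$, hence $r_k \to 0^+$. Since $r_k w^k \to v_1 + v_2 \neq 0$, this yields $\|w^k\| \to \infty$, completing the argument. The remaining edge cases are routine: if $v_1 = 0$ or $v_2 = 0$, the sum coincides with the other summand and lies in $C_\infty X$ by hypothesis; if $v_1 + v_2 = 0$, one needs $0 \in C_\infty X$, which follows from the existence of $v_1$ alone, because the sequence $x^k$ satisfies $\|x^k\| \to \infty$ and we may take the auxiliary scalars $1/(k\|x^k\|)$ to obtain a witness for $0$. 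There is no serious obstacle; the only nonobvious ingredient is the weight $\lambda_k = t_k/(t_k+s_k)$, which is precisely what is forced by requiring a convex combination of $x^k$ and $y^k$ whose rescaling reproduces $v_1 + v_2$ in the limit.
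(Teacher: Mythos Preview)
Your proof is correct. The paper itself does not prove this lemma at all; it simply cites \cite[Proposition~2.1.5]{Auslender2003} and moves on, so there is no in-paper argument to compare against. Your sequence-based argument is the natural one given the paper's sequential definition of $C_\infty X$, and the handling of the edge cases (in particular, producing a witness for $0\in C_\infty X$ when $v_1+v_2=0$, which is exactly where the requirement $\|w^k\|\to\infty$ would otherwise fail) is clean. The only remark worth making is that the cited reference typically establishes convexity via the alternative characterization $C_\infty X=\{v:\ x_0+tv\in \overline{X}\ \text{for all}\ t\ge 0\}$ (valid for nonempty convex $X$ and any fixed $x_0\in X$), from which convexity is immediate; your direct approach avoids proving that equivalence and is therefore more self-contained relative to the definition actually used in the paper.
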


From now on, the term ``ray'' means {\em open ray emanating from the origin $0 \in \mathbb{R}^n$}, i.e., we consider only {\em rays with the endpoint $0$ but $0$ is not included}.

\begin{lemma}\label{Lemma32}
Let $X \subset \mathbb{R}^n$ be a convex set such that $0 \in \ri(X).$ Then $C_\infty X \subset X.$
\end{lemma}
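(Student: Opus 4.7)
The plan is to express $v$ as an explicit convex combination of two elements of $X$: one is $x^k$ itself (from the defining sequence of the asymptotic cone) and the other is a small vector lying inside the relative-interior neighborhood of $0$.

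First I would dispose of the trivial case $v = 0$, where $v \in \ri(X) \subset X$ immediately. For $v \neq 0$, I would pick sequences $x^k \in X$ and $t_k > 0$ with $\|x^k\| \to \infty$ and $t_k x^k \to v$. Since $\|t_k x^k\| \to \|v\| < \infty$ while $\|x^k\| \to \infty$, necessarily $t_k \to 0$. Because $0 \in \ri(X) \subset X \subset \aff(X)$, the affine hull $\aff(X)$ is actually a linear subspace, so it is closed under scaling and contains each $t_k x^k$; since $\aff(X)$ is closed, it also contains $v$.

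Now I would use the definition of relative interior to fix $r > 0$ with $\mathbb{B}_r(0) \cap \aff(X) \subset X$, and define
$$w_k := \frac{v - t_k x^k}{1 - t_k}$$
for $k$ large enough that $t_k < 1$. By construction $w_k \in \aff(X)$, and since $t_k x^k \to v$ and $t_k \to 0$ we have $\|w_k\| \to 0$, so $w_k \in \mathbb{B}_r(0) \cap \aff(X) \subset X$ for all sufficiently large $k$. Rearranging the definition of $w_k$ gives the identity
$$v = t_k x^k + (1 - t_k) w_k,$$
which exhibits $v$ as a convex combination of two points of $X$ (namely $x^k$ and $w_k$) with weights in $[0,1]$; convexity of $X$ then yields $v \in X$.

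There is no real obstacle here: the only thing one has to notice is the correct split of $v$, and that dividing $v - t_k x^k$ by $1 - t_k$ keeps the remainder term $w_k$ inside $\aff(X)$ and small enough to fall in the relative-interior ball. The assumption $0 \in \ri(X)$ is used in two essential ways — to guarantee $0 \in X$ (so that $\aff(X)$ is linear, giving $v \in \aff(X)$) and to provide the neighborhood of $0$ in $\aff(X)$ that captures $w_k$.
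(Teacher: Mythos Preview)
Your proof is correct and takes a genuinely different route from the paper's. The paper argues by contradiction, ray by ray: assuming a ray $\ell \subset C_\infty X$ is not contained in $X$, it locates the exit point $x$ of $\ell$ from $\overline{X}$, passes a supporting hyperplane $H_x$ through $x$, and observes that $C_\infty X$ must then lie in the parallel closed half-space $H_0^*$ through the origin, which excludes $\ell$ --- a contradiction. Your argument is instead direct and constructive: you manufacture the convex combination $v = t_k x^k + (1-t_k) w_k$ with $w_k = (v - t_k x^k)/(1-t_k) \to 0$, so that $w_k$ eventually lands in the relative-interior ball around $0$. Your approach is more elementary in that it avoids the supporting-hyperplane (separation) theorem altogether and uses only the definitions of convexity, relative interior, and the asymptotic cone; the paper's approach has the advantage of a clear geometric picture and in fact proves a slightly sharper statement along the way (each ray of $C_\infty X$ is entirely contained in $X$).
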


\begin{proof}
Replacing $\mathbb{R}^n$ by $\aff(C_\infty X)$ if necessary, we may suppose that $\dim \aff(C_\infty X) = n.$
Let $\ell$ be a ray in $C_\infty X$. By contradiction, assume that $\ell\not\subset X$. Since $X$ is convex, there is a unique point  $x\in \ell \cap(\overline X\setminus\ri(X))$, i.e., $x$ is the point such that $[0,x)\subset X$ and $(\ell\setminus[0,x]) \cap X = \emptyset.$ Let $H_x$ be a supporting hyperplane for $X$ at $x$ and let $H_0$ be the linear subspace of dimension $n-1$ parallel to $H_x$. Denote by $H_x^*$ and $H_0^*$ the closed half spaces not containing $\ell\setminus[0,x]$ and $\ell$ respectively. Then evidently, $X\subset H_x^*$, so
\begin{eqnarray*}
C_\infty X &\subset & C_\infty H_x^* \ = \ H_0^* \ \not\supset \ \ell.
\end{eqnarray*}
Consequently $\ell\not\subset C_\infty X,$ which is a contradiction.
\end{proof}

\begin{lemma}\label{Lemma33}
Let $X \subset \mathbb{R}^n$ and $x \in \mathbb{R}^n.$ Then $C_\infty X = C_\infty (X - \{x\}),$ where $X - \{x\} :=\{y - x \ : \ y \in X\}.$ Moreover, for any $T\in L(\mathbb{R}^n,\mathbb{R}^m)$, $T(X)$ is closed if and only if $T(X-\{x\})$ is closed.
\end{lemma}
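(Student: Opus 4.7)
The plan is to prove the two assertions separately; both are essentially direct consequences of the definitions, and the second will follow from a short observation about translations.

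For the equality $C_\infty X = C_\infty(X-\{x\})$, I would argue by double inclusion, noting that one inclusion implies the other by symmetry since $X = (X-\{x\}) - \{-x\}$. Take $v \in C_\infty X$ with witnesses $x^k \in X$ and $t_k > 0$ such that $\|x^k\| \to \infty$ and $t_k x^k \to v$. The key quantitative step is to observe that $t_k \to 0$: since $\|t_k x^k\|$ converges to $\|v\|$, in particular it is bounded by some constant $M$, and hence $t_k \le M/\|x^k\| \to 0$. Now set $y^k := x^k - x \in X - \{x\}$. Then the triangle inequality gives $\|y^k\| \ge \|x^k\| - \|x\| \to \infty$, and
\[
t_k y^k \;=\; t_k x^k - t_k x \;\longrightarrow\; v - 0 \;=\; v,
\]
so $v \in C_\infty(X-\{x\})$. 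The reverse inclusion follows by replacing $x$ with $-x$ and applying the same argument to $X-\{x\}$.

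For the second assertion, the observation is that for any linear map $T$,
\[
T(X - \{x\}) \;=\; \{T(y)-T(x) : y \in X\} \;=\; T(X) - \{T(x)\}.
\]
Translation by the fixed vector $T(x)$ is a homeomorphism of $\mathbb{R}^m$, so $T(X) - \{T(x)\}$ is closed if and only if $T(X)$ is closed. This concludes the proof.

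I do not anticipate any real obstacle here. The only point requiring a moment's thought is the derivation of $t_k \to 0$, but this is immediate from the boundedness of $\{t_k x^k\}$ combined with $\|x^k\| \to \infty$. Everything else is direct bookkeeping with the definition of the asymptotic cone and the linearity of $T$.
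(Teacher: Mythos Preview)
Your proof is correct and follows the same straightforward approach the paper has in mind; the paper itself simply records ``The proof is straightforward'' without details. Your argument fills in exactly the expected verification from the definitions, including the one nontrivial observation that $t_k\to 0$.
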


\begin{proof}
The proof is straightforward.
\end{proof}

\begin{remark}
Lemma~\ref{Lemma33} permits us to bring the study of the closedness of linear images of arbitrary convex sets to case of convex sets containing the origin. Indeed, it is enough to pick any point $x\in\ri(X)$ and consider the set $X-\{x\}$ which clearly contains the origin.
\end{remark}

\begin{lemma}[{cf. \cite[Proposition~3]{Borwein2009}}, \cite{Dedieu1978}, {\cite[Corollary~2.3.2]{Auslender2003}}] \label{Lemma34}
Let $X\subset \mathbb{R}^n$ be a closed set (not necessarily convex) and let $T \in L(\mathbb{R}^n, \mathbb{R}^m).$ If
\begin{eqnarray*}
C_\infty X \cap \ker(T) &=& \{0\},
\end{eqnarray*}
then there exists an open neighborhood $\mathcal N$ of $T$ in $L(\mathbb{R}^n, \mathbb{R}^m)$ such that for any $S\in\mathcal N,$ we have
\begin{eqnarray*}
C_\infty X \cap \ker(S) &=& \{0\}
\end{eqnarray*}
and $S(X)$ is closed in $\mathbb{R}^m.$
\end{lemma}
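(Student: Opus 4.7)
The plan is to decouple the lemma into two independent claims and handle them in sequence: (i) the condition $C_\infty X \cap \ker(S) = \{0\}$ is stable under small perturbations of the linear map, producing a neighborhood $\mathcal N$ of $T$; (ii) any $S$ satisfying this condition automatically has $S(X)$ closed. Claim (i) gives the neighborhood; claim (ii) then applies to every $S \in \mathcal N$, and the conclusion follows.

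For claim (i) I would argue by contradiction. If no such $\mathcal N$ exists, there is a sequence $S_k \to T$ in operator norm together with unit vectors $v^k \in C_\infty X$ with $S_k(v^k) = 0$. By compactness of the unit sphere in $\mathbb{R}^n$, pass to a subsequence so that $v^k \to v$ with $\|v\| = 1$. Since $C_\infty X$ is closed (immediate from the definition), $v \in C_\infty X$. Moreover
\begin{equation*}
\|T(v)\| \leqslant \|T(v) - S_k(v^k)\| \leqslant \|T - S_k\|\,\|v\| + \|T\|\,\|v - v^k\| \longrightarrow 0,
\end{equation*}
so $T(v) = 0$, i.e.\ $v \in C_\infty X \cap \ker(T) = \{0\}$, contradicting $\|v\| = 1$.

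For claim (ii), fix $S$ with $C_\infty X \cap \ker(S) = \{0\}$ and take $y^k = S(x^k) \in S(X)$ with $y^k \to y$; I want $y \in S(X)$. Split on whether $\{x^k\}$ has a bounded subsequence. If yes, extract a convergent subsequence $x^{k_j} \to x$; since $X$ is closed, $x \in X$, and continuity of $S$ gives $y = S(x) \in S(X)$. Otherwise, after passing to a subsequence $\|x^k\| \to \infty$; set $t_k := 1/\|x^k\| \to 0^+$ and extract a further subsequence with $t_k x^k \to v$, $\|v\| = 1$. The definition of the asymptotic cone in the paper then forces $v \in C_\infty X$, while
\begin{equation*}
S(v) = \lim_{k\to\infty} S(t_k x^k) = \lim_{k\to\infty} t_k y^k = 0,
\end{equation*}
since $\{y^k\}$ is bounded and $t_k \to 0$. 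Thus $v \in C_\infty X \cap \ker(S) = \{0\}$, contradicting $\|v\| = 1$, so this second case cannot occur.

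Neither step poses a serious obstacle; both are standard normalization-and-compactness arguments driven by the fact that $C_\infty X$ records exactly the directions along which sequences in $X$ can escape to infinity. The one point that has to be handled carefully is conceptual rather than technical: the closedness argument in (ii) must be applied with the kernel condition for the perturbed map $S$, not for $T$, which is why (i) has to be carried out first to supply the neighborhood on which (ii) is invoked uniformly.
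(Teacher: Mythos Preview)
Your proof is correct and follows essentially the same approach as the paper's: a compactness argument on the unit sphere of $C_\infty X$ to get the neighborhood, followed by the standard normalization argument to show closedness of $S(X)$. The only cosmetic difference is that the paper phrases step~(i) directly (the compact set $T(C)$, with $C=\{v\in C_\infty X:\|v\|=1\}$, has positive distance from $0$, which persists for nearby $S$), whereas you argue the same point sequentially by contradiction.
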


\begin{proof}
Let $C := \{v\in C_\infty X \ : \ \|v\|=1\}$. Since $C_\infty X$ is closed, both $C$ and $T(C)$ are compact. By the assumption, $0 \not \in T(C).$ Therefore, $\mathrm{dist}(0, T(C)) > 0.$ Consequently, there exists an open neighborhood $\mathcal N$ of $T$ in $L(\mathbb{R}^n, \mathbb{R}^m)$ such that $\mathrm{dist}(0,S(C)) > 0$ for any $S\in\mathcal N,$ i.e., $C_\infty X \cap \ker(S) = \{0\}$. Now fix
a linear mapping $S\in\mathcal N$ and let $x^k \in X$ be a sequence such that $S(x^k)\to y;$ we need to show that $y\in S(X)$. First of all, assume that the sequence $x^k$ is unbounded. Taking a subsequence if necessary, we may suppose that $\|x^k\|\to\infty$ and the sequence $\frac{x^k}{\|x^k\|}$ is convergent to a limit $v.$ Then $v\in C$ by definition. Moreover, we have
\begin{eqnarray*}
\|S(v)\| &=& \left \|\lim_{k\to+\infty}S \Big(\frac{x^k}{\|x^k\|}\Big) \right\| \ = \ \left \|\lim_{k\to+\infty}\frac{S(x^k)}{\|x^k\|} \right \| \ = \ \frac{\|y\|}{\lim_{k\to+\infty}\|x^k\|}\ = \ 0.
\end{eqnarray*}
Hence $v\in C \cap \ker(S),$ which contradicts the fact that $\mathrm{dist}(0, S(C)) > 0.$ Therefore, the sequence $x^k$ is bounded, and so it has a cluster point, say $x.$ Clearly, $x \in X$ (since $X$ is closed) and $y = S(x) \in S(X).$ The lemma is proved.
\end{proof}

\begin{lemma}\label{Lemma35}
Let $X \subset \mathbb{R}^n$ be a closed convex set and let $T \in L(\mathbb{R}^n, \mathbb{R}^m)$ be such that the restriction of $T$ on the linear subspace $Y := \mathrm{aff}(C_\infty X)$ has rank $m.$ If
\begin{eqnarray*}
\ri(C_\infty X)\cap\ker(T)  &\ne& \emptyset,
\end{eqnarray*}
then there exists an open neighborhood $\mathcal{N}$ of $T$ in $L(\mathbb{R}^n, \mathbb{R}^m)$ such that
\begin{eqnarray*}
\ri(C_\infty X)\cap\ker(S) &\ne & \emptyset
\end{eqnarray*}
for any $S \in \mathcal{N}.$ In addition, $S(X)$ is closed in $\mathbb{R}^m$ for each $S\in\mathcal{N}.$
\end{lemma}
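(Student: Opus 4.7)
The plan is to obtain more than the stated conclusion: I claim that on a suitable neighborhood $\mathcal{N}$ of $T$ one actually has $S(X) = \mathbb{R}^m$, which is trivially closed. The argument naturally splits into a stability step for the two hypotheses placed on $T$, and a consequence step that upgrades them to the surjectivity of $S$ onto all of $\mathbb{R}^m$.

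For the stability, the condition that $S|_Y$ has rank $m$ is an open one on $S$, so it persists on some neighborhood of $T$. To propagate the non-emptiness of $\ri(C_\infty X) \cap \ker(S)$, I would fix $v \in \ri(C_\infty X) \cap \ker(T)$ and choose a complement $Y_0$ with $Y = \ker(T|_Y) \oplus Y_0$, so that $T|_{Y_0} \colon Y_0 \to \mathbb{R}^m$ is a linear isomorphism; note in particular that $v \in \ker(T|_Y)$. For $S$ close to $T$ the restriction $S|_{Y_0}$ remains a linear isomorphism, and the explicit formula
\[
v_S \;:=\; v - (S|_{Y_0})^{-1}(S(v))
\]
produces an element of $Y \cap \ker(S)$ that depends continuously on $S$ and reduces to $v$ at $S = T$. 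Since $\ri(C_\infty X)$ is open in $Y = \aff(C_\infty X)$, the inclusion $v_S \in \ri(C_\infty X) \cap \ker(S)$ then holds on a (possibly smaller) neighborhood $\mathcal{N}$ of $T$.

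The consequence step begins by showing that $S(C_\infty X) = \mathbb{R}^m$ for every $S \in \mathcal{N}$. Given $w \in \mathbb{R}^m$, surjectivity of $S|_Y$ yields $u \in Y$ with $S(u) = w$; since $v_S \in \ri(C_\infty X)$ and $u \in Y$, for all sufficiently small $t > 0$ the point $v_S + tu$ still lies in $\ri(C_\infty X)$. Because $C_\infty X$ is a cone, $\frac{1}{t}(v_S + tu) = \frac{v_S}{t} + u$ is also in $\ri(C_\infty X) \subset C_\infty X$, with $S$-image equal to $w$. Using that $C_\infty X$ coincides with the recession cone of the closed convex set $X$ (an observation forced on us by the hypothesis, which rules out $X$ bounded), for any fixed $x^0 \in X$ and any $y \in \mathbb{R}^m$ I can pick $w \in C_\infty X$ with $S(w) = y - S(x^0)$; then $x^0 + w \in X$ and $S(x^0 + w) = y$, so $S(X) = \mathbb{R}^m$ as required.

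The step I expect to be most delicate is the surjectivity claim $S(C_\infty X) = \mathbb{R}^m$: it is the only place where the full relative interior hypothesis (rather than merely the non-emptiness of $C_\infty X \cap \ker S$) is essential, and it hinges on combining the openness of $\ri(C_\infty X)$ inside $Y$ with the rank-$m$ hypothesis on $S|_Y$ through the cone structure of $C_\infty X$. Once this is in hand, the passage from surjectivity on $C_\infty X$ to surjectivity on $X$ is a short recession-cone computation.
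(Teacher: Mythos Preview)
Your proposal is correct and follows the same overall strategy as the paper: both arguments show that under the hypotheses one actually has $S(C_\infty X)=\mathbb{R}^m$, and then deduce $S(X)=\mathbb{R}^m$.

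There are two noteworthy differences in execution. First, for the stability part the paper simply invokes \cite[Lemma~1]{Borwein2009}, whereas you give a self-contained argument via the explicit correction $v_S=v-(S|_{Y_0})^{-1}(S(v))$; your construction is clean and makes the continuous dependence on $S$ transparent. Second, for the surjectivity $S(C_\infty X)=\mathbb{R}^m$ the paper works geometrically with an angular neighborhood $N_\delta(\ell)\subset C_\infty X$ of the kernel ray and orthogonal projection, while you use the purely convex-analytic fact that $\ri(C_\infty X)$ is relatively open in $Y$ together with the cone scaling $\tfrac{1}{t}(v_S+tu)$; the two are equivalent in content but yours is a bit more economical. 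Finally, to pass from $C_\infty X$ to $X$ the paper translates so that $0\in\ri(X)$ and applies Lemma~\ref{Lemma32} to get $C_\infty X\subset X$, whereas you invoke directly the standard identification of $C_\infty X$ with the recession cone of a closed convex set (so $x^0+C_\infty X\subset X$); this is the same fact in slightly different packaging.
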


\begin{proof}
According to \cite[Lemma 1]{Borwein2009}, it suffices to show that $T(X)$ is closed. In view of Lemma~\ref{Lemma33}, we may assume that $0\in \ri(X)$. Then $C_\infty X\subset X$ by Lemma~\ref{Lemma32}. So in order to prove that $T(X)$ is closed, it is enough to show that $T(C_\infty X)=\mathbb{R}^m$. By the assumption, there exists a ray $\ell\subset\ri(C_\infty X) \cap \ker(T)$. Moreover, there is a real number $\delta \in (0, 1)$ such that
 \begin{eqnarray*}
N_\delta(\ell) &:=& \{x \in Y\setminus\{0\}\ : \ \widehat{\ell,\ell_x}\leqslant\frac{\pi}{2} \quad \textrm{ and } \quad \sin\widehat{\ell,\ell_x}\leqslant\delta\} \ \subset \ C_\infty X,
\end{eqnarray*}
where $\ell_x$ denotes the open ray emanating from the origin to $x,$ namely, $\ell_x=\{rx \ : \ r > 0\}$ and $\widehat{\ell, \ell_x}$ denotes the angle between $\ell$ and $\ell_x.$
Take any $y\in \mathbb{R}^m.$ The assumption that $\rank(T|_Y) = m$ implies that the affine space $(T|_Y)^{-1}(y) = T^{-1}(y) \cap Y$ is non empty. Let $x$ be the orthogonal projection of the origin $0 \in \mathbb{R}^n$ on $(T|_Y)^{-1}(y)$, $v$ be the unit direction of $\ell$ and $\displaystyle t>\|x\|\sqrt{\frac{1-\delta^2}{\delta^2}}.$ By definition, we have $x + tv\in (T|_Y)^{-1}(y)$ and $x\perp v,$ so
\begin{eqnarray*}
\sin\widehat{\ell,\ell_{x+tv}} &=& \frac{\|x\|}{\|x+tv\|} \ = \ \frac{\|x\|}{\sqrt{\|x\|^2 + t^2}} \ < \ \frac{\|x\|}{\sqrt{\|x\|^2+\displaystyle\frac{1-\delta^2}{\delta^2}\|x\|^2}} \ =\ \delta.
\end{eqnarray*}
Hence $x+tv\subset N_\delta(\ell)\subset C_\infty X,$ which yields $y = T(x + tv) \in T(C_\infty X).$ Thus $T(C_\infty X) = \mathbb{R}^m$ and the lemma follows.
\end{proof}

\begin{corollary}\label{Corollary31}
Let $X \subset \mathbb{R}^n$ be a closed convex set and let $Y := \mathrm{aff}(C_\infty X).$ Designate by $\mathcal M'\subset L(\mathbb{R}^n, \mathbb{R}^m)$ the family of all linear mappings $T$ such that the restriction $T|_Y$ has rank $m$. Then the set
\begin{eqnarray*}
\mathcal{H} &:=& \big \{T\in \mathcal M':\ C_\infty X\cap\ker (T)=\{0\}\ \text { or }\ \ri(C_\infty X)\cap\ker(T)\not=\emptyset \big \}
\end{eqnarray*}
is an open dense set in $L(\mathbb{R}^n, \mathbb{R}^m)$. Moreover $\mathcal{H} \subseteq  \mathrm{int}(\{T\in \mathcal M' \ : \ T(X) \text{ is closed}\}).$
\end{corollary}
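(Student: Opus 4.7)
The plan is to decompose the claim into three pieces: openness of $\mathcal{H}$, the inclusion $\mathcal{H} \subseteq \mathrm{int}(\{T \in \mathcal{M}' : T(X) \text{ is closed}\})$, and density of $\mathcal{H}$. The first two will follow directly from the machinery already built in Section~\ref{Section3}. Given $T \in \mathcal{H}$, either $C_\infty X \cap \ker T = \{0\}$, in which case Lemma~\ref{Lemma34} applies, or $\ri(C_\infty X) \cap \ker T \neq \emptyset$, in which case Lemma~\ref{Lemma35} applies; each lemma returns an open neighborhood of $T$ on which the corresponding disjointness/incidence condition persists and on which $S(X)$ is closed. Intersecting with $\mathcal{M}'$, which is open because the rank-$m$ condition for $T|_Y$ amounts to the non-vanishing of a fixed family of $m \times m$ minors, handles both openness and the inclusion at once.

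For density I would fix $T_0 \in L(\mathbb{R}^n, \mathbb{R}^m)$ and $\epsilon > 0$ and produce $T \in \mathcal{H}$ with $\|T - T_0\| < \epsilon$ in two stages. Lemma~\ref{Lemma24} shows that $L(\mathbb{R}^n, \mathbb{R}^m) \setminus \mathcal{M}'$ is porous, hence nowhere dense, so I can first select $T_1 \in \mathcal{M}'$ with $\|T_1 - T_0\| < \epsilon/2$. If $T_1 \in \mathcal{H}$ I am done, so assume not: writing $K := C_\infty X$ and $W := \ker T_1|_Y$, we have $W \cap K \neq \{0\}$ and $W \cap \ri(K) = \emptyset$. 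Pick any nonzero $v \in W \cap K$ and any $u \in \ri(K)$ (which is nonempty because $\aff(K) = Y$); observe $T_1(u) \neq 0$, for otherwise $u$ would lie in $W \cap \ri(K)$, contradicting $T_1 \notin \mathcal{H}$.

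The second stage is a single rank-one perturbation that steers the kernel off the relative boundary into the relative interior. For small $\delta > 0$ I would set
$$T_2 \;:=\; T_1 \;-\; \frac{\delta\, T_1(u)}{\|v + \delta u\|^{2}}\;\langle\,\cdot\,,\,v + \delta u\rangle.$$
A direct expansion using $T_1(v) = 0$ gives $T_2(v + \delta u) = 0$. Moreover, $v + \delta u$ is a positive scalar multiple of the convex combination $\frac{1}{1+\delta}v + \frac{\delta}{1+\delta}u$, which lies in $\ri(K)$ by the standard segment property of relative interior, and positive homogeneity of the cone $K$ then places $v + \delta u$ itself in $\ri(K)$. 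The operator-norm estimate $\|T_2 - T_1\| = \delta\|T_1(u)\|/\|v+\delta u\| \to 0$ as $\delta \to 0^+$, together with the openness of $\mathcal{M}'$, allows me to choose $\delta$ small enough that $T_2 \in \mathcal{M}'$ and $\|T_2 - T_0\| < \epsilon$; the inclusion $v + \delta u \in \ker T_2|_Y \cap \ri(K)$ then certifies $T_2 \in \mathcal{H}$. The main obstacle is precisely this rank-one construction; its virtue is that it bypasses any analysis of how $W$ meets the facial structure of $K$, relying only on the cone-interior property to place $v + \delta u$ in $\ri(K)$.
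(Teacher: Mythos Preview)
Your argument is correct. The treatment of openness and of the inclusion $\mathcal{H}\subseteq\mathrm{int}(\{T\in\mathcal{M}':T(X)\text{ closed}\})$ is identical to the paper's: both simply invoke Lemmas~\ref{Lemma34} and~\ref{Lemma35} together with the openness of $\mathcal{M}'$. For density the paper follows the same overall plan---given $T\in\mathcal{M}'\setminus\mathcal{H}$, perturb $T$ so that some vector of $\ri(C_\infty X)$ lands in the kernel---but implements the perturbation differently: it picks a unit $v^*\in C_\infty X\cap\ker T$, a sequence $v^k\in\ri(C_\infty X)$ with $v^k\to v^*$, and sets $T_k:=T\circ(\pi_k^2|_{(v^*)^\perp})^{-1}\circ\pi_k^2$, where $\pi_k^2$ is the orthogonal projection onto $(v^k)^\perp$; one then checks $T_k(v^k)=0$ and $\|T_k-T\|\leqslant\|T\|\tan\widehat{v^k,v^*}\to 0$. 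Your explicit rank-one correction $T_2=T_1-\dfrac{\delta\,T_1(u)}{\|v+\delta u\|^2}\langle\,\cdot\,,v+\delta u\rangle$ achieves the same effect more directly, and the verification that $v+\delta u\in\ri(C_\infty X)$ via the segment property plus the fact that $\ri(C_\infty X)$ is a cone is clean. The paper's projection construction has the minor conceptual advantage of not requiring an auxiliary interior point $u$ chosen in advance, but your version is shorter and avoids inverting $\pi_k^2|_{(v^*)^\perp}$.
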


\begin{proof}
Note that the set $\mathcal M'$ is dense and open in $L(\mathbb{R}^n, \mathbb{R}^m)$. Hence in light of Lemmas~\ref{Lemma34}~and~\ref{Lemma35}, it is clear that $\mathcal{H}$ is open and the last statement holds. So it remains to show that $\mathcal{H}$ is dense. Clearly, it is enough to prove that $\mathcal{H}$ is dense in $\mathcal M'$. To see this, take any $T\in \mathcal M'\setminus \mathcal{H}.$ Evidently, we have
\begin{eqnarray*}
\{0\} & \ne &  C_\infty X \cap \ker(T) \ \subset \ C_\infty X\setminus\ri(C_\infty X).
\end{eqnarray*}
Pick $v^* \in (C_\infty X\cap\ker(T))\setminus(\{0\}\cup\ri(C_\infty X))$ such that $\|v^*\|=1$. By definition, there is a sequence $v^k \in \ri(C_\infty X)$ with $\lim_{k \to \infty} v^k = v^*$, so
\begin{eqnarray*}
\lim_{k \to \infty} T(v^k) &=& T(v^*) \ = \ 0.
\end{eqnarray*}
With no loss of generality, we can suppose that $\|v^k\|=1$. 
For each $k$, let $\pi_k^1 \colon \mathbb R^n\to (v^*)^\perp$ and $\pi_k^2 \colon \mathbb R^n\to (v^k)^\perp$ designate the orthogonal projections. Note that for $k$ large enough, $\pi_k^2|_{(v^*)^\perp}$ is a linear isomorphism, so set $T_k:=T\circ (\pi_k^2|_{(v^*)^\perp})^{-1}\circ\pi_k^2$. Clearly $\pi_k^2(v^k)=0$, thus $T_k(v^k)=0.$ Now for any $x\in\mathbb R^n$ with $\|x\|=1$, we have 
$$\begin{array}{lll}
\|T_k(x)-T(x)\|&=&\|(T\circ (\pi_k^2|_{(v^*)^\perp})^{-1}\circ\pi_k^2)(x)-T(x)\| \\
&=&\|(T\circ (\pi_k^2|_{(v^*)^\perp})^{-1}\circ\pi_k^2)(x)-T(\pi_k^1(x))\| \\
&\leqslant& \|T\| \cdot \|(\pi_k^2|_{(v^*)^\perp})^{-1}(\pi_k^2(x))-\pi_k^1(x)\|\leqslant\|T\|\tan \widehat{v^k,v^*}.
\end{array} $$
Consequently, for $k$ large enough, the linear mapping $T_k$ 
belongs to $\mathcal{H}$ and we have $\|T_k-T\|\to 0$ as $k\to+\infty.$ This ends the proof of the corollary.
\end{proof}

\section{Proof of the main result}\label{Section4}

We can now complete the proof of Theorem~\ref{OpenDense}.

\begin{proof}[Proof of Theorem~\ref{OpenDense}]
Set $Y := \aff(C_\infty X)$, the smallest linear subspace containing $C_\infty X$ and let $p:=\dim Y.$ Since the closedness of a linear mapping from $\mathbb{R}^n$ into $\mathbb{R}^m$ is invariant by making coordinate changes, we can suppose that
\begin{eqnarray*}
Y &:=& \{x \in \mathbb{R}^n \ : \ x_{p+1}=\dots=x_n=0\} \ = \ \mathbb{R}^p\times\{(0,\dots,0)\}
\end{eqnarray*}
and identify $Y$ with $\mathbb{R}^p$. Denote by $\mathcal M\subset L(\mathbb{R}^n, \mathbb{R}^m)$ and $\mathcal M'\subset L(\mathbb{R}^n, \mathbb{R}^m)$, respectively, the family of all linear mappings $T$ of maximal rank (i.e., $\rank (T) = \min\{m,n\}$) and the family of all linear mappings $T$ such that the restriction $T|_Y$ has maximal rank (i.e., $\rank (T|_Y) = \min\{m,p\}$). Let $\mathcal M''\subset L(Y,\mathbb{R}^m)=L(\mathbb{R}^p, \mathbb{R}^m)$ designate the family of all linear mappings $T\colon Y\to \mathbb{R}^m$ of maximal rank.
It is easy to verify that $\mathcal M$ and $\mathcal M'$ are open dense subsets of $L(\mathbb{R}^n, \mathbb{R}^m),$ and $\mathcal M''$ is an open dense subset of $L(Y,\mathbb{R}^m).$ We consider three cases:

\subsubsection*{Case 1: $\dim Y\leqslant m$}
 In view of Lemma~\ref{Lemma24}, the set $L(\mathbb{R}^n, \mathbb{R}^m)\setminus \mathcal M'$ is $\sigma$-porous in $L(\mathbb{R}^n, \mathbb{R}^m).$ So to prove that the set
$$L(\mathbb{R}^n, \mathbb{R}^m)\setminus \mathrm{int}( \{T\in L(\mathbb{R}^n, \mathbb{R}^m) \ : \ T(X) \text{ is closed}\})$$
is $\sigma$-porous in $L(\mathbb{R}^n, \mathbb{R}^m)$, it is enough to show that the set
$$\mathcal M'\setminus \mathrm{int}(\{T\in L(\mathbb{R}^n, \mathbb{R}^m) \ : \ T(X) \text{ is closed}\})$$
is $\sigma$-porous in $L(\mathbb{R}^n, \mathbb{R}^m)$. Since $\dim Y\leqslant m$, for each $T\in\mathcal M'$, the restriction $T|_Y$ is one-to-one and so $C_\infty X \cap \ker (T|_Y) = \{0\}.$  Note that $C_\infty X\subset Y$, so it follows easily that $C_\infty X \cap \ker (T) = \{0\}$ for each $T\in \mathcal M',$ and thus we are done by Lemma~\ref{Lemma34}.

\subsubsection*{Case 2: $\dim Y>m$ and $Y=\mathbb{R}^n$}
In light of Lemma~\ref{Lemma24}, the set $L(\mathbb{R}^n, \mathbb{R}^m)\setminus \mathcal M$ is $\sigma$-porous in $L(\mathbb{R}^n, \mathbb{R}^m)$. Hence to prove the theorem in this case, it is enough to show that the set
$$\mathcal M\setminus \mathrm{int}(\{T\in L(\mathbb{R}^n, \mathbb{R}^m) \ : \ T(X) \text{ is closed}\})$$
is $\sigma$-porous in $L(\mathbb{R}^n, \mathbb{R}^m)$. To this end, set
\begin{eqnarray*}
\mathcal{A} &:=& \{T\in \mathcal M'':\ \{0\}\not=\ri(C_\infty X)\cap\ker(T)\subset C_\infty X\setminus\ri(C_\infty X)\}.
\end{eqnarray*}
Note that $\mathcal M=\mathcal M''$ as $Y = \mathbb{R}^n,$ so
\begin{eqnarray*}
\mathcal{A} &=& \{T\in \mathcal M:\ \{0\}\not=\ri(C_\infty X)\cap\ker(T)\subset C_\infty X\setminus\ri(C_\infty X)\}.
\end{eqnarray*}
From the proof of~\cite[Theorem 3.2]{Borwein2010}, we can see that the set $\mathcal{A}$ is $\sigma$-porous in $L(\mathbb{R}^n, \mathbb{R}^m),$ i.e.,
$\mathcal{A} = \bigcup_{k = 1}^\infty \mathcal{A}_k,$ where each $\mathcal{A}_k$ is a porous set in $L(\mathbb{R}^n, \mathbb{R}^m).$ Let
\begin{eqnarray*}
\mathcal{B} &:=& \{T\in \mathcal M \ : \ T(X) \textrm{ is not closed}\}.
\end{eqnarray*}
By Lemmas~\ref{Lemma34}~and~\ref{Lemma35}, it is clear that $\mathcal M \setminus \mathcal{A} \subseteq \mathcal M\setminus \mathcal{B}.$ Hence $\mathcal{B} \subseteq \mathcal{A}$ and we have $\mathcal{B} = \bigcup_{k = 1}^\infty (\mathcal{A}_k \cap \mathcal{B}).$ Now the proof of the theorem in this case follows from Lemma~\ref{Lemma22}.

\subsubsection*{Case 3: $\dim Y>m$ and $Y\not=\mathbb{R}^n$}
Again, by Lemma~\ref{Lemma24}, it is sufficient to prove that the set
$$\mathcal M'\setminus \mathrm{int}(\{T\in L(\mathbb{R}^n, \mathbb{R}^m) \ : \ T(X) \text{ is closed}\})$$
is $\sigma$-porous in $L(\mathbb{R}^n, \mathbb{R}^m)$. From Case 2, we know that $\mathcal{A}$ is $\sigma$-porous in $L(Y,\mathbb{R}^m).$ Let 
$$\pi \colon L(\mathbb{R}^n, \mathbb{R}^m)\to L(Y,\mathbb{R}^m)=L(\mathbb{R}^p, \mathbb{R}^m)$$
be the linear mapping defined by $\pi(T) := T|_Y.$ Clearly, $\pi$ is surjective.
By Lemma~\ref{Lemma21}, $\pi^{-1}(\mathcal{A})$ is $\sigma$-porous in $L(\mathbb{R}^n, \mathbb{R}^m)$. Note that $\pi^{-1}(\mathcal M'')=\mathcal M'$ and so
\begin{eqnarray*}
\pi^{-1}(\mathcal{A})
&=& \pi^{-1} \big(\{S\in \mathcal M'' \ : \ \{0\}\not=\ri(C_\infty X)\cap\ker(S)\subset C_\infty X\setminus\ri(C_\infty X)\} \big)\\
&=&  \{T\in \pi^{-1}(\mathcal M'') \ : \ \{0\}\not=\ri(C_\infty X)\cap\ker(T)\subset C_\infty X\setminus\ri(C_\infty X)\}\\
&=& \{T\in \mathcal M' \ : \ \{0\}\not=\ri(C_\infty X)\cap\ker(T)\subset C_\infty X\setminus\ri(C_\infty X)\}\\
&=& \mathcal M' \setminus \mathcal{H} \\
&\supseteq& \mathcal{M}' \setminus \mathrm{int}( \{T\in \mathcal M' \ : \ T(X) \text{ is closed}\}),
\end{eqnarray*}
where the set $\mathcal{H}$ and the last inclusion are taken from Corollary~\ref{Corollary31}. Applying Lemma~\ref{Lemma22} again, it follows that that $\mathcal{M}'\setminus \mathrm{int}(\{T\in \mathcal M' \ : \ T(X) \text{ is closed}\})$ is $\sigma$-porous in $L(\mathbb{R}^n,\mathbb{R}^m)$. This ends the proof of the theorem.
\end{proof}

\subsection*{Acknowledgments}
The authors wish to thank the referee(s) for her/his/their careful reading and constructive comments on the manuscript.


\end{document}